\documentclass[12pt]{amsart}
\usepackage{amsmath}
\usepackage{amsthm}
\usepackage{amssymb}
\usepackage{amsfonts,mathrsfs}
\usepackage{stmaryrd}
\usepackage{amsxtra}
\usepackage{epsfig}
\usepackage{verbatim}

\theoremstyle{plain}
\newtheorem{theorem}[equation]{Theorem}

\newtheorem{lemma}[equation]{Lemma}

\newtheorem{definition}[equation]{Definition}
\theoremstyle{remark}

\numberwithin{equation}{section}

\include{header}

\setlength{\textwidth}{5.35in}
\setlength{\textheight}{8.3in}

\newcommand{\fa}{\mathfrak{a}}

\newcommand{\fb}{\mathfrak{b}}

\newcommand{\closedfa}{\overline{\mathfrak{a}}}

\newcommand{\Jfca}{\mathcal{J}(c\cdot\mathfrak{a})}
\newcommand{\JfcaAN}{\mathcal{J}_{an}(c\cdot\mathfrak{a})}
\newcommand{\JfaAN}{\mathcal{J}_{an}(\mathfrak{a})}
\newcommand{\JfcaAL}{\mathcal{J}_{al}(c\cdot\mathfrak{a})}
\newcommand{\JfaAL}{\mathcal{J}_{al}(\mathfrak{a})}

\newcommand{\closedfaA}{\overline{\mathfrak{a}^{A}}^{an}}
\newcommand{\closedfaAN}{\overline{\mathfrak{a}}^{an}}
\newcommand{\closedfbAN}{\overline{\mathfrak{b}}^{an}}
\newcommand{\closedfaAL}{\overline{\mathfrak{a}}^{al}}
\newcommand{\closedfaAminus}{\overline{\mathfrak{a}^{A-1}}^{an}}
\newcommand{\faA}{\mathfrak{a}^{A}}
\newcommand{\faAminus}{\mathfrak{a}^{A-1}}

\newcommand{\one}{\vec{\mathbf{1}}}

\begin{document}

\bibliographystyle{plain}
\title[Multiplier Ideals and Integral Closure of Monomial Ideals]{Multiplier Ideals and Integral Closure of Monomial Ideals: An Analytic Approach}
\author{Jeffery D. McNeal \& Yunus E. Zeytuncu}
\thanks{Research  of both authors was partially supported by NSF grants\\
AMS Subject Classification. Primary 13P99, 14Q99, 32S45; Secondary 14M25, 13B22.}
\address{Department of Mathematics, \newline Ohio State University, Columbus, Ohio 43210}
\email{mcneal@math.ohio-state.edu}
\email{yunus@math.ohio-state.edu}
\begin{abstract}
Proofs of two results about a monomial ideal -- describing membership in auxiliary ideals associated to the monomial ideal -- are given which do not invoke resolution of singularities. The AM--GM inequality is used as a substitute for taking a log resolution of the monomial ideal.
\end{abstract}
\maketitle

\section{Introduction}

Multiplier ideals\footnote{There is not a unique notion of ``multiplier ideals'': (i) sets of multipliers can be associated to a given geometric object via different boundedness conditions, and many of these sets form ideals, (ii) multipliers can be associated to many different objects; e.g., plurisubharmonic functions, divisors in projective manifolds, real hypersurfaces in $\mathbb{C}^n$, metrics on line bundles, have naturally associated but somewhat different multiplier ideals, and (iii) {\it analytic} multipliers are naturally defined using transcendental data (holomorphic functions, real coefficients, etc.) while {\it algebraic} multipliers involve only algebraic data (polynomial functions, rational coefficients, etc.).

We side-step all these issues here, by dealing with the notion of multiplier ideal of greatest current interest and by restricting it to the polynomial ring.} have a crucial role in some significant recent work in algebraic and analytic geometry. For example, the prominent work in \cite{DemaillyKollar}, \cite{Nadel90}, \cite{Siu93}, \cite{Siu96}, \cite{Siu98}, \cite{Siu00} use these ideals as a fundamental tool and these ideals underly the recent progress on the Minimal Model program described in \cite{demaillypcmi}. The utility of multiplier ideals, in these works and other applications, stem from the dual properties with respect to vanishing theorems and measurement of singularities that these ideals exhibit.

There are two ways to define the multiplier ideals commonly appearing in the literature --- analytically and algebraically --- and the two definitions look initially dissimilar.
This is true even when considering ideals (both the input ideal and its multiplier ideal) in the polynomial ring $\mathbb{C}\left[z_1,\dots,z_n\right]$, where the definitions give rise to the same multiplier ideal. Given a non-zero ideal $\fa=\left(p_1(z),\dots,p_k(z)\right)\subset \mathbb{C}\left[z_1,\dots,z_n\right]$  generated by $k$ polynomials, define another ideal associated to $\fa$ by

\begin{equation}\label{an_multiplier}
\JfaAN =\left\{g\in \mathbb{C} \left[z_1,\dots,z_n\right]  : \hskip .2 cm \frac{|g(z)|^2}{|p_1(z)|^{2}+\dots+|p_k(z)|^{2}} \in L^1_{loc}(\mathbb{C}^n)\right\}.
\end{equation}
Following Lazarsfeld \cite{Lazarsfeldbook}, call this the  \textit{analytic multiplier ideal} associated to $\fa$. On the other hand, for the algebraic definition, first fix a \textit{log resolution} of $\fa$: $\mu:X'\to X$, with $\fa\cdot \mathcal{O}_{X'}=\mathcal{O}_{X'}(-F)$. Then the ideal defined as

\begin{equation}\label{al_multiplier}
\JfaAL=\mu_{\ast}\mathcal{O}_{X'}\left(K_{X'/ X}-[F]\right)
\end{equation}
is called the \textit{algebraic multiplier ideal} associated to $\fa$.\footnote{The notation is explained in the next section, where definitions  \eqref{an_multiplier} and \eqref{al_multiplier} are also generalized.}

Using either formulation \eqref{an_multiplier} or \eqref{al_multiplier}, it is difficult to actually compute the multiplier ideal associated to a given ideal $\fa$. A large class of ideals $\fa$ for which \eqref{al_multiplier} has been computed, however, are those generated by monomials. In \cite{Howald2001}, Howald showed that if $\fa\subset \mathbb{C}  \left[z_1,\dots,z_n \right]   $ is generated by monomials, then $\JfaAL$ can be described in terms of the Newton polyhedra associated to the exponent set of the generators (see Theorem \ref{main} in section 3). To establish this result, Howald must, per definition, take a log resolution of $\fa$, which he then analyzes using some facts about toric varieties.

\vskip .5cm
In the third section of the paper, we give an elementary analytic proof of Howald's theorem. This proof uses only the simple
Arithmetic Mean-Geometric Mean inequality
\begin{equation}\label{AGinequality}
x_1^{t_1}\dots x_k^{t_k}\leq t_1x_1+\dots+t_kx_k\hskip .5 cm\left(\leq x_1+\dots +x_k\right)
\end{equation}
for any $x_i\geq 0$ and $t_i\geq 0$ such that $t_1+\dots+t_k=1$, and a careful examination of the Newton polyhedra associated to the monomial ideal $\fa$. The use of \eqref{AGinequality} as a substitute for log resolving $\fa$ is the main point of this paper.

Indeed, replacing the resolution of singularities arguments in \cite{Howald2001} by the AM-GM inequality was, for the authors, a significant conceptual simplification.
Since monomial ideals are interesting in their own right, and also model general ideals under certain circumstances, for example see \cite{deFernex} and \cite{MustataGraded}, our alternate proof of Howald's basic result may be of some interest to others.
\vskip .5cm

In the fourth section of the paper, similar elementary ideas are applied to the notion of integral closure. The AM-GM inequality is used to describe the integral closure of a monomial ideal via the Newton polyhedra and then this description is used to prove the equivalence of two definitions of integral closure for monomial ideals. This result is not new:
the equivalence of the two notions of integral closure, for an arbitrary polynomial ideal, was established by Teissier, see \cite{teissier82} and \cite{Teissier2008}. But the method of proof in Section 4, which again hinges on using \eqref{AGinequality} rather than log resolution, is the reason for its inclusion. 
\vskip .5cm

For the basic facts, examples, applications, and alternate definitions of multiplier ideals, we refer the reader to several well-written sources. For information on the algebraic aspects of multiplier ideals, we recommend both Lazarsfeld's book \cite{Lazarsfeldbook}, Chapter 9, and the survey article of Blickle-Lazarsfeld \cite{BlickleLazarsfeld}. For information on the analytic aspects of multiplier ideals, we recommend the two sets of lecture notes by Demailly, \cite{demaillytrieste}, \cite{demaillypcmi}.\\

This project grew out of the Park City Mathematical Institute's program during the summer of 2008. In particular, the lectures of Robert Lazarsfeld at PCMI2008 and the year-long AAG meetings at Ohio State, that were designed to follow-up and expand on the material discussed in PCMI2008, inspired us to write this paper.
We thank Rob Lazarsfeld, Dror Varolin, Mircea Musta\c{t}\u{a}, and the main participants in the AAG meetings --- Herb Clemens, Gary Kennedy, Mirel Caibar, Jie Wang, Yu-Han Liu, Janhavi Joshi, Sivaguru Ravisankar, and Wing-San Hui --- for their stimulating conversations regarding this material.

\section{Definitions and notation}

We start by folding a rational parameter, $c>0$, into the definition of $\JfaAN$.

\begin{definition}\label{an_cmultiplier} If  $\fa=(p_1(z),\dots,p_k(z))\subset \mathbb{C}  \left[z_1,\dots,z_n \right]   $  is a non-zero ideal generated by $k$ polynomials and $c\in\mathbb{Q}^+$,
the ideal corresponding to $\fa$ defined by

\begin{equation}\label{an_cmultiplier_eq}
\JfcaAN =\left\{g\in \mathbb{C}  \left[z_1,\dots,z_n \right]    : \hskip .2 cm \frac{|g(z)|^2}{\sum_{j=1}^k |p_j(z)|^{2c}} \in L^1_{loc}(\mathbb{C}^n)\right\},
\end{equation}
is called the analytic multiplier ideal of $\fa$ of depth $c$.
\end{definition}

In Definition \ref{an_cmultiplier}, Lebesgue measure, $dm$, is tacitly used, i.e. $f\in L^1_{loc}(\mathbb{C}^n)$ if $\int_K |f| \, dm < \infty$ for all compact $K\subset \mathbb{C}^n$. Other measures could be used instead. There is no meaning assigned to $c\cdot\fa$ in \eqref{an_cmultiplier_eq}, independent of its role in indexing $\JfcaAN$; in other articles, e.g. \cite{Lazarsfeldbook}, $\JfcaAN$ is written multiplicatively as $\mathcal{J}_{an}(\mathfrak{a}^c)$.

It follows directly from Definition \ref{an_cmultiplier} that the ideals $\JfcaAN$ measure the order of vanishing of the zero-variety of $\fa$,
\begin{equation}\label{zero_locus}
V(\fa)=\left\{z\in\mathbb{C}^n:p_1(z)=\dots=p_k(z)=0\right\}.
\end{equation}
For instance, if $c>0$ is small enough (depending on the degrees of the polynomials $p_1,\dots, p_k$), then $\JfcaAN$ is the full polynomial ring $ \mathbb{C}  \left[z_1,\dots,z_n \right]   $. And in the opposite direction, if $x_0\in V(\fa)$ and all the generators of $\fa$ vanish to high order at $x_0$, then the integrability condition for $g\in\mathcal{J}_{an}(1\cdot\mathfrak{a})$ forces $g$ to vanish appropriately at $x_0$. Similarly, for any ideal $\fa$, $\JfcaAN$ becomes farther and farther from the full ring $ \mathbb{C}  \left[z_1,\dots,z_n \right]   $ as $c\to+\infty$.

In order to define $\JfcaAL$, we recall two standard notions in algebraic geometry. A \textit{log resolution} of the ideal $\fa$ is a proper, birational map $\mu: X'\longrightarrow\mathbb{C}^n$ whose exceptional locus is a divisor, $E_\mu$, and
\begin{enumerate}
\item $X'$ is non-singular.
\item  For some effective divisor $F=\sum a_j E_j$,
$$ \fa\cdot\mathcal{O}_{X'} =\mu^{-1}(\fa) =\mathcal{O}_{X'}\left(-F\right). $$
\item The divisor $F+E_\mu$ has simple normal-crossing support.
\end{enumerate}
A log resolution exists for any ideal in $\mathbb{C}  \left[z_1,\dots,z_n \right]$, a fact that follows from the fundamental work of Hironaka, \cite{hironaka}.

The second notion is the \textit{relative canonical bundle}: given $\mu: X'\longrightarrow\mathbb{C}^n$ a proper, birational map as above, the relative canonical bundle associated to this map is

\begin{equation}
	K_{X'/ \mathbb{C}^n} =: K_{X'} - \mu^*\left(K_{\mathbb{C}^n}\right).
\end{equation}
In divisor notation, this bundle is expressed as $\text{Div}\left(\det\left(\text{Jac}_{\mathbb{C}^n}\mu\right)\right)$.
For information on the terms appearing in these two definitions, see \cite{Lazarsfeldbook} and \cite{eisenbudbook}. 
\begin{definition}\label{al_cmultiplier} Let  $\fa=(p_1(z),\dots,p_k(z))\subset \mathbb{C}  \left[z_1,\dots,z_n \right]   $  be a non-zero ideal generated by $k$ polynomials and $c\in\mathbb{Q}^+$.
Let $\mu: X'\longrightarrow\mathbb{C}^n$ be a log resolution of $\fa$ and $F$ the effective divisor such that $\fa\cdot\mathcal{O}_{X'}= \mathcal{O}_{X'}\left(-F\right) $.
Then the algebraic multiplier ideal of $\fa$ of depth $c$ is defined to be

\begin{equation}\label{al_multiplier_eq}
\JfcaAL=\mu_{\ast}\mathcal{O}_{X'}\left(K_{X'/ \mathbb{C}^n}-\lfloor cF\rfloor\right),
\end{equation}
where $\lfloor cF\rfloor$ denotes the round-down of the divisor $cF$.
\end{definition}
The definition of $\JfcaAL$ does not depend on the particular log resolution of $\fa$, see \cite{Lazarsfeldbook}, pages 156--158. For basic properties of $\JfcaAL$, like subadditivity, see\cite{DEL} and \cite{MustataSum}.

An initial reason for interest in $\JfcaAN$ and $\JfcaAL$ is obvious: both measure how complicated \eqref{zero_locus} is. However, the special interest in
$\JfaAN$ and $\JfaAL$ arises from the remarkable properties, both local and global, that these particular ideals possess and because of the powerful cohomological vanishing statements that can be inferred using them; cf. \cite{Lazarsfeldbook}, Chapter 9,  \cite{BlickleLazarsfeld}, \cite{demaillytrieste}, and \cite{demaillypcmi}. From the analytic side, the special properties of $\JfaAN$ stem from the fact that the membership condition \eqref{an_cmultiplier_eq} involves the $L^2$ norm, rather than another boundedness condition.

\vskip .25 cm

A monomial ideal is an ideal $\fa\subset \mathbb{C}  \left[z_1,\dots,z_n \right]   $ that is generated by monomials. The notation $\fa=(z^{\alpha^1},\dots,z^{\alpha^k})$ will denote the ideal generated by $k$ monomials where each $\alpha^i=(\alpha^i_1,\dots,\alpha^i_n)\in \left(\mathbb{N}\cup \left\{0\right\}\right)^n$ is a multi-index.

Any monomial $z^{\alpha}=z_1^{\alpha_1}\dots z_n^{\alpha_n}\in \mathbb{C}  \left[z_1,\dots,z_n \right]   $ corresponds to a point of the lattice $L=\mathbb{N}\cup\{0\}\times\dots\times\mathbb{N}\cup\{0\}\subset \left(\mathbb{R}_{+}\cup\{0\}\right)^n$ given by $\left(\alpha_1,\dots,\alpha_n\right)$.
The set of generators $\{z^{\alpha^1},\dots,z^{\alpha^k}\}$ of $\fa$ thus determines a certain subset of $L$. Let $C$ denote the convex hull of this set in $\left(\mathbb{R}_{+}\cup\{0\}\right)^n$.
\begin{definition}\label{Newton}
The (solid) Newton polyhedra of the ideal $\fa$ is defined as
\begin{equation}
P(\fa)=\bigcup_{\mathbf{x}\in C} \mathbf{x}+\left(\mathbb{R}_{+}\cup\{0\}\right)^n.
\end{equation}
If $c\in\mathbb{Q}^+$, the scaled Newton polyhedra of depth $c$ is as
\begin{equation}
P\left(c\fa\right)=\left\{ c\cdot\mathbf{x}: \mathbf{x}\in P(\fa)\right\}.
\end{equation}
\end{definition}

For the rest of the note, the following convenient, though non-standard, notation will be used. For two multi-indices $\alpha,\beta$,  write $\alpha \prec \beta$ to denote that $\alpha_i<\beta_i$ for all $i=1,\dots,n$ and $\alpha \preceq \beta$ for $\alpha_i\leq \beta_i$ for all $i=1,\dots,n$. Also, the expression $A\lesssim B$ will mean that there exists a constant $k>0$ such that $A\leq kB$. Finally, the symbol $\one$ denotes the n-tuple $(1,\dots,1)$ and $P^{\circ}(\fa)$ denotes the interior of $P(\fa)$.\\

\section{Howald's theorem}

Howald's description of the multiplier ideal associated to a monomial ideal is

\begin{theorem}\label{main}
Let $\fa$ be a monomial ideal, $\fa=(z^{\alpha^1},z^{\alpha^2},\dots,z^{\alpha^k})$, then
\begin{enumerate}
\item[(i)] $\JfcaAL$ is also a monomial ideal.
\item[(ii)] $z^{\beta}\in \JfcaAL$ if and only if $\beta+\one \in P^{\circ}(c\fa)$.
\end{enumerate}
\end{theorem}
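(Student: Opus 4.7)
The plan is to work entirely with the analytic definition $\JfcaAN$, exploiting that the denominator $\sum_j |z^{\alpha^j}|^{2c}$ depends only on the moduli $|z_i|$. For part (i), let $g=\sum_\gamma c_\gamma z^\gamma$ be the power-series expansion of an arbitrary element of $\JfcaAN$ and let $K\subset\mathbb{C}^n$ be compact. Replacing $K$ by its torus saturation $T^n\cdot K$ (still compact) and passing to polar coordinates $z_i=r_ie^{i\theta_i}$, orthogonality of the characters $e^{i\langle\gamma-\gamma',\theta\rangle}$ collapses the $\theta$-integral of $|g|^2$ onto its diagonal part and yields
\[\int_{T^n\cdot K}\frac{|g|^2}{\sum_j |z^{\alpha^j}|^{2c}}\,dm=(2\pi)^n\sum_\gamma|c_\gamma|^2\int\frac{r^{2\gamma+\one}}{\sum_j r^{2c\alpha^j}}\,dr_1\cdots dr_n.\]
Every term on the right is nonnegative, so finiteness of the sum forces finiteness of each term, and every monomial $z^\gamma$ with $c_\gamma\neq 0$ lies in $\JfcaAN$; this proves (i).

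Thus for (ii) it suffices to test single monomials $z^\beta$. The essential case is local integrability near the origin: at any other point of $V(\fa)$, bounding the nonvanishing coordinates reduces the analysis to an analogous origin problem in a lower-dimensional ambient $\mathbb{C}^m$, and both the hypothesis $\beta+\one\in P^\circ(c\fa)$ and its negation project compatibly to that setting. Integrating out the angular variables reduces the question to the finiteness of
\[I_\beta:=\int_{[0,\epsilon]^n}\frac{r^{2\beta+\one}}{\sum_j r^{2c\alpha^j}}\,dr_1\cdots dr_n.\]

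For the forward direction, $\beta+\one\in P^\circ(c\fa)$ is equivalent to the existence of $t_j\geq 0$ with $\sum_j t_j=1$ satisfying $\beta_i+1>c\sum_j t_j\alpha^j_i$ in every coordinate $i$. The weighted AM--GM inequality \eqref{AGinequality} gives the crucial estimate
\[\sum_j r^{2c\alpha^j}\;\geq\;\sum_j t_j\,r^{2c\alpha^j}\;\geq\;\prod_j \bigl(r^{2c\alpha^j}\bigr)^{t_j}\;=\;r^{2c\sum_j t_j\alpha^j},\]
and substituting this lower bound decouples $I_\beta$ into a product of one-dimensional integrals $\int_0^\epsilon r_i^{2(\beta_i+1-c\sum_j t_j\alpha^j_i)-1}\,dr_i$, each convergent because its exponent exceeds $-1$.

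For the converse, pass to logarithmic coordinates $s_i=-\log r_i$, converting $I_\beta$ (up to bounded factors) into
\[J_\beta=\int_{[s_0,\infty)^n}\frac{ds_1\cdots ds_n}{\sum_j\exp\bigl(2\langle\beta+\one-c\alpha^j,\,s\rangle\bigr)}.\]
If $\beta+\one\notin P^\circ(c\fa)$, a separating-hyperplane argument on the convex set $P(c\fa)$ --- whose recession cone contains $\mathbb{R}^n_{\geq 0}$ --- produces a nonzero $v\succeq 0$ with $\langle\beta+\one-c\alpha^j,v\rangle\leq 0$ for every $j$. Along any line $s=s^{(0)}+tv$, each linear form $\langle\beta+\one-c\alpha^j,s\rangle$ has nonpositive derivative in $t$, so the integrand of $J_\beta$ stays bounded below by a positive constant on the ``tube''
\[\Omega=\bigl\{s^{(0)}+tv:s^{(0)}\in[s_0,s_0+1]^n,\ t\geq 0\bigr\}\subset[s_0,\infty)^n.\]
Since $v\neq 0$, $\Omega$ has infinite $n$-dimensional Lebesgue measure, forcing $J_\beta=\infty$ and $z^\beta\notin\JfcaAN$. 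The main obstacle is this converse: extracting the bad direction $v$ from the Newton-polyhedron geometry and producing an infinite-measure witness of divergence. The logarithmic substitution is what turns the convex-duality condition into the clean tube argument, dual to the role AM--GM plays on the forward side.
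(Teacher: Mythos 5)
Your proof is correct and, for part (i) and the forward implication of (ii), follows the paper's argument essentially verbatim: orthogonality of the monomials $z^\gamma$ in $L^2$ of a torus-saturated compact set handles (i), and the weighted AM--GM inequality applied to the denominator decouples the integral for the ``if'' half of (ii). The only genuine divergence from the paper is your treatment of the converse. The paper keeps the radial coordinates $r_i$ and directly lower-bounds the integral over the explicit $n$-dimensional shell $\{r_1\in[0,1],\ r_i\in[\tfrac12 r_1^{b_i},r_1^{b_i}]\}$, where $\mathbf{b}$ is obtained by normalizing one coordinate of a supporting normal $\mathbf{v}^\beta$ to $1$. You instead pass to logarithmic coordinates $s_i=-\log r_i$, so that the failed constraint becomes a direction $v\succeq 0$ with $\langle\beta+\one-c\alpha^j,v\rangle\leq 0$ for all $j$, and the integrand is uniformly bounded below on the infinite-measure tube $[s_0,s_0+1]^n+\mathbb{R}_{\geq 0}v$. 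These two arguments are dual formulations of the same idea --- the paper's shell is precisely a tube of bounded width in $s$-space around the ray $s_1\mathbf{b}$ --- but the log-coordinate version avoids the normalization of the first coordinate of $\mathbf{v}^\beta$ and makes the convexity input (supporting hyperplane of the recession-closed polyhedron $P(c\fa)$, forcing $v\succeq 0$) and the divergence mechanism (bounded integrand, infinite measure) slightly more transparent. Both proofs otherwise use identical reductions: integrating out angular variables, and dropping vanishing coordinates to pass from a general point of $V(\fa)$ back to the origin in lower-dimensional space.
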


We now give a proof of Theorem \ref{main} using the definition of $\JfcaAN$ instead of $\JfcaAL$.

\begin{proof}

The equivalence in (ii) will be proved first. Let $\beta$ be a multi-index such that $\beta+\one\in P^{\circ}(c\fa)$. From Definition \ref{Newton}, it is simple to observe that there exists numbers $t_i\geq0$ satisfying $t_1+\dots+t_k=1$ such that $t_1c\alpha^1+\dots+t_kc\alpha^k\prec \beta+\one$.

The first step is to check the integrability of
\begin{equation}\label{ratio}
\frac{|z^{\beta}|^2}{|z^{\alpha^1}|^{2c}+\dots+|z^{\alpha^k}|^{2c}}\hskip 1cm\text{near the origin.}
\end{equation}
Let $U$ be the unit polydisc centered at the origin in $\mathbb{C}^n$. Applying \eqref{AGinequality} to the denominator in \eqref{ratio} yields
\begin{align*}
\int_U \frac{|z^{\beta}|^2}{|z^{\alpha^1}|^{2c}+\dots+|z^{\alpha^k}|^{2c}}\, dV(z)\leq \int_U \frac{|z^{\beta}|}{|z^{\alpha^1}|^{2ct_1}\dots |z^{\alpha^k}|^{2ct_k}}\, dV(z)&\\
\lesssim \int_{[0,1]^n}\frac{r_1^{2\beta_1+1}\dots r_n^{2\beta_n+1}}{r_1^{2c(\alpha^1_1t_1+\dots+\alpha^k_1t_k)}\dots r_n^{2c(\alpha^1_nt_1+\dots+\alpha^k_nt_k)}}\,\, dr_1\dots dr_n&\\
\lesssim \prod_{i=1}^n\int_0^1  \frac{r_i^{2(\beta_i+1-c(\alpha^1_it_1+\dots +\alpha^k_it_k))}}  {r_i}\, dr_i&.
\end{align*}
But $t_1c\alpha^1+\dots +t_kc\alpha^k\prec \beta+\one$ implies that the each of the one variable integrals above is finite and therefore $\frac{|z^{\beta}|^2}{|z^{\alpha^1}|^{2c}+\dots +|z^{\alpha^k}|^{2c}}$ is locally integrable near the origin.

The denominator $|z^{\alpha^1}|^{2c}+\dots +|z^{\alpha^k}|^{2c}$ does not vanish at a point off the coordinate axes (i.e. if all the coordinates are non-zero), so local integrability near these points follows immediately. For the other zeros of $|z^{\alpha^1}|^{2c}+\dots +|z^{\alpha^k}|^{2c}$, the problem is reduced to integrability near the origin in a smaller dimension, then handled as above.  Indeed, suppose $|z^{\alpha^1}|^{2c}+\dots +|z^{\alpha^k}|^{2c}$ vanishes at $w=(w_1,\dots ,w_n)\neq 0$. Let $\widetilde{w}=(w_{i_1},..,w_{i_m})$ be the zero coordinates of $w$ and let $\widetilde{\beta}=(\beta_{i_1},\dots ,\beta_{i_m})\text{ and } \widetilde{\alpha}^j=(\alpha^j_{i_1},\dots ,\alpha^j_{i_m})$ denote the corresponding multi-indices. In a small neighborhood of $w$, the pair of estimates
\begin{align*}
\frac{|\widetilde{z}^{\widetilde{\beta}}|^2}{|\widetilde{z}^{\widetilde{\alpha}^1}|^{2c}+\dots +|\widetilde{z}^{\widetilde{\alpha}^k}|^{2c}}\lesssim \frac{|z^{\beta}|^2}{|z^{\alpha^1}|^{2c}+\dots +|z^{\alpha^k}|^{2c}}\lesssim\frac{|\widetilde{z}^{\widetilde{\beta}}|^2}{|\widetilde{z}^{\widetilde{\alpha}^1}|^{2c}+\dots +|\widetilde{z}^{\widetilde{\alpha}^k}|^{2c}},
\end{align*}
hold, since the non-zero coordinates and their powers are bounded from below and above. Thus, the integrability of the middle rational expression
near $w$ in $\mathbb{C}^n$ is equivalent to the integrability of the bounding rational expression
near the origin in $\mathbb{C}^m$ for some $m\leq n$. Furthermore, $t_1c\alpha^1+\dots +t_kc\alpha^k\prec \beta+\one$ implies $t_1c\widetilde{\alpha}^1+\dots +t_kc\widetilde{\alpha}^k\prec \widetilde{\beta}+\one$, so the integrability of the bounding rational expression follows from the same argument that showed \eqref{ratio} was integrable near the origin.

This proves one implication in (ii): $\beta+\one \in P^{\circ}(c\fa)\implies z^{\beta}\in \JfcaAN$.
\vskip 0.5cm

For the converse, first observe that $P(c\fa)$ is cut-out by finitely many hyperplanes $H_1,\dots ,H_N$. Each hyperplane has an equation of the form
$$H_i: v^i_1x_1+\dots +v^i_nx_n=\kappa^i \hskip 1cm \text{ where } \kappa^i = 0\text{ or } 1,$$
for a vector $(v^i_1,\dots ,v^i_n)$ normal to the hyperplane. For simplicity, write these equations as
$$\left<\mathbf{v}^i,\mathbf{x}\right>=\kappa^i,\qquad i=1,\dots , N.$$
Since $\mathbf{x}+\mathbb{R}^n_+\subset P(\fa)$ for any point $\mathbf{x}\in P(\fa)$, the normal vector $\mathbf{v}^i$ to any $H_i$ can be taken to point toward the inside of the solid Newton polyhedra, i.e., all the components of each vector $\mathbf{v}^i$ may be assumed non-negative. Each $H_i$ splits $\mathbb{R}^n$ into two half-spaces. With this choice of sign, $P(c\fa)$ lies in the region where $\left<\mathbf{v}^i,\mathbf{x}\right>\geq \kappa^i$ and
\begin{equation*}\label{hyperplanes}
\gamma \in P(c\fa) \text{ if and only if }\left<\mathbf{v}^i,\gamma\right> \geq \kappa^i\text{ for all }i=1,\dots ,N.
\end{equation*}

Now consider a multi-index $\beta$ such that $\beta+\one \not \in P^{\circ}(c\fa)$. Since $\beta+\one \not \in P^{\circ}(c\fa)$, there exists a hyperplane $H^\beta\in\left\{H_1,\dots, H_N\right\}$, with normal vector $\mathbf{v}^\beta$, such that $\left< \mathbf{v}^\beta, \beta+\one\right>\leq \left< \mathbf{v}^\beta, c\alpha^j\right>$ for all $j=1,\dots , k$. At least one of the components of $\mathbf{v}^\beta$ is non-zero, which can be taken to be the first component without loss of generality. Scale $\mathbf{v}^\beta$ to obtain a vector $\mathbf{b}=\left(1,b_2,\dots ,b_n\right)$, $b_i\geq0$ such that
\begin{equation}\label{necessary}
\left<\beta+\one,\mathbf{b}\right> \leq\min_{1\leq i\leq k}\left<c\alpha^i,\mathbf{b}\right>:=m.
\end{equation}

It is now straightforward to show that the monomial $z^{\beta}$ fails to be locally integrable near the origin. If $U$ is the unit polydisc centered at the origin in $\mathbb{C}^n$,

\begin{align*}
\int_U &\frac{|z^{\beta}|^2}{|z^{\alpha^1}|^{2c}+\dots+|z^{\alpha^k}|^{2c}}\, dV(z)\\ &\geq\int_0^1\int_{\frac 12 r_1^{b_2}}^{r_1^{b_2}}\dots\int_{\frac 12 r_1^{b_n}}^{r_1^{b_n}}~\frac{r_1^{2\beta_1+1}\dots r_n^{2\beta_n+1}\,\, dr_1\dots dr_n}{r_1^{2c\alpha^1_1}\dots r_n^{2c\alpha^1_n}+\dots +r_1^{2c\alpha^k_1}\dots r_n^{2c\alpha^k_n}}\\
&\geq \int_0^1\int_{\frac 12 r_1^{b_2}}^{r_1^{b_2}}\dots\int_{\frac 12 r_1^{b_n}}^{r_1^{b_n}}~\frac{r_1^{2\beta_1+1}\dots r_n^{2\beta_n+1}\,\, dr_1\dots dr_n}{r_1^{2\alpha^1_1}\dots r_1^{b_n2\alpha^1_n}+\dots+r_1^{2\alpha^k_1}\dots r_1^{b_n2\alpha^k_n}}\\
&= \text{const. } \int_0^1\frac{r_1^{2\left<\beta+\one,\mathbf{b}\right>}}{r_1\left(r_1^{2\left<c\alpha^1,\mathbf{b}\right>}+\dots +r_1^{2\left<c\alpha^k,\mathbf{b}\right>}\right)}\,
dr_1\\
&\gtrsim\int_0^1\frac{r_1^{2\left<\beta+\one,\mathbf{b}\right>}}{r_1r_1^{2m}}\, dr_1=\infty.
\end{align*}
Thus,  $z^{\beta} \not \in \Jfca$, which proves the second implication in (ii): $z^{\beta}\in \Jfca\implies\beta+\one\in P^{\circ}(c\fa)$.

To complete the proof of Theorem \ref{main}, it remains to show (i) holds. Observe that the denominator in Definition \ref{an_cmultiplier} is radial if $\fa$ is monomial. Suppose $g(z)=\sum_{\gamma}g_{\gamma}z^{\gamma}$ is an element of the ideal $\JfcaAN$, i.e.,
$$\int_U \frac{|g(z)|^2}{|z^{\alpha^1}|^{2c}+\dots +|z^{\alpha^k}|^{2c}}\, dV(z)<\infty.$$
Using the orthogonality of the monomials $z^\nu$, $\nu\in \left(\mathbb{N}\cup \left\{0\right\}\right)^n$, in $L^2(U)$, this finite integral can be rewritten as follows:
\begin{align*}
\int_U \frac{|g(z)|^2}{|z^{\alpha^1}|^{2c}+\dots +|z^{\alpha^k}|^{2c}}\, dV(z)&=\int_U\sum_{j=1}^k \frac{1}{|z^{\alpha^j}|^{2c}}\sum_{\gamma,\eta}g_{\gamma}\overline{g_{\eta}}z^{\gamma}\overline{z^{\eta}}\, dV(z)\\
&=\sum_{\gamma}|g_{\gamma}|^2\int_U\sum_{j=1}^k \frac{|z^{\gamma}|^2}{|z^{\alpha^j}|^{2c}}\, dV(z).
\end{align*}
Since there is no cancellation amongst the terms in the last expression, each $\int_U\sum_{j=1}^k \frac{|z^{\gamma}|^2}{|z^{\alpha^j}|^{2c}}$ must be finite.
This says that each monomial $z^{\gamma}$ appearing in $g$ must belong to $\JfcaAN$. Consequently, $\JfcaAN$ is generated by monomials, as claimed by statement (i).\\
\end{proof}

\section{Integral Closure of Ideals}

If $\fa=\left(p_1(z),\dots ,p_k(z)\right)\subset \mathbb{C}[z_1,\dots ,z_n]$ is a non-zero ideal,  the \textit{integral closure} of $\fa$ is
another geometrically natural ideal associated to $\fa$. As in the case of multiplier ideals, there are two ways to define the integral closure, algebraically and analytically, and the two definitions look dissimilar.

The algebraic definition of integral closure is
\begin{definition}\label{closure_al}
Let $\fa\subset \mathbb{C}[z_1,\dots ,z_n]$ be a non-zero ideal. A polynomial
$f(z)\in \mathbb{C}[z_1,\dots ,z_n]$ belongs to $\closedfaAL$ if there exist $b^j\in \fa^j$ such that $f$ satisfies the following equation
\begin{equation}\label{closure_equation}
f^N+b_1f^{N-1}+\dots +b_{N-1}f+b_N=0.
\end{equation}
Here $\fa^j$ is the j-th power ideal of $\fa$.
\end{definition}
\noindent For equivalent algebraic definitions of integral closure, its properties and connection to other ideas, see \cite{Lazarsfeldbook} pages 216--221, and the relevant sections in \cite{eisenbudbook}.

The analytic definition of integral closure is
\begin{definition}\label{closure_an}
Let $\fa\subset \mathbb{C}[z_1,\dots ,z_n]$ be a non-zero ideal. A polynomial
$f(z)\in \mathbb{C}[z_1,\dots ,z_n]$ belongs to $\closedfaAN$ if for any $q\in \mathbb{C}^n$ there exists a neighborhood $\mathcal{V}$ of $q$ and $C>0$ such that
\begin{equation}\label{closure_ineq}
\left|f(z)\right|\leq C\sum_{i=1}^k\left|p_i(z)\right| \text{ for all }z\in \mathcal{V},
\end{equation}
where $\fa =(p_1\dots, p_k).$
\end{definition}
\noindent This definition of integral closure is appealing because \eqref{closure_ineq} can be used to test membership in $\closedfaAN$ transparently.

Showing that Definition \ref{closure_al} is equivalent to Definition \ref{closure_an} is not trivial. However Teissier \cite{teissier82} and Lejeune-Jalabert and Teissier \cite{Teissier2008} showed that $\closedfaAL$ and
$\closedfaAN$ do in fact coincide, for any ideal $\fa$ generated by polynomials in $\mathbb{C}[z_1,\dots ,z_n]$. An essential ingredient in
Teissier's proof is taking a log resolution of the ideal $\fa$.

If $\fa$ is generated by arbitrary polynomials, resolving $\fa$ in some fashion may perhaps be necessary in order to obtain a result of this type. But if $\fa$ is a monomial ideal, we give a simple proof of Teissier's theorem below (Theorem \ref{definition}) that does not use a resolution of singularities theorem. The first
step of this proof is to describe $\closedfaAN$ in terms of $P(\fa)$. The proof of this description uses the Arithmetic Mean-Geometric Mean inequality \eqref{AGinequality} in much the same way as was done in Section 3.

\begin{theorem}\label{description} Let $\fa=\left(z^{\alpha^1},\dots ,z^{\alpha^k}\right)$ be a monomial ideal then
\begin{enumerate}
\item[(i)] $\closedfaAN$ is also a monomial ideal.
\item[(ii)] $z^{\beta} \in \closedfaAN \text{ if and only if } \beta \in P(\fa).$
\end{enumerate}
\end{theorem}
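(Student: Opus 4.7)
The plan parallels Howald's proof from Section 3: first establish (i) by a torus-symmetry argument that reduces (ii) to testing monomials, then prove the two implications in (ii) using AM--GM and a supporting hyperplane, respectively.

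For (i), the key observation is that $\sum_i |z^{\alpha^i}|$ is invariant under the real $n$-torus action $T_\theta(z) = (e^{i\theta_1}z_1,\dots,e^{i\theta_n}z_n)$. Given $g = \sum_\gamma g_\gamma z^\gamma \in \closedfaAN$ and a point $q \in \mathbb{C}^n$, the orbit $T^n\cdot q$ is compact, so the neighborhoods supplied by Definition \ref{closure_an} cover it in finitely many pieces; inside this cover one can extract a torus-invariant open tube $\mathcal{U}$ (a product of annuli around $(|q_1|,\dots,|q_n|)$) on which $|g| \leq C\sum_i|z^{\alpha^i}|$. Torus-invariance of $\mathcal{U}$ and of the right-hand side lets one replace $z$ by $T_\theta(z)$ to obtain $|g(T_\theta z)| \leq C\sum_i|z^{\alpha^i}|$ uniformly in $\theta$. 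The Fourier-coefficient identity
\begin{equation*}
g_\gamma z^\gamma = \frac{1}{(2\pi)^n}\int_{T^n} g(T_\theta z)\,e^{-i\langle\gamma,\theta\rangle}\,d\theta
\end{equation*}
and the triangle inequality then yield $|g_\gamma z^\gamma| \leq C\sum_i|z^{\alpha^i}|$ on $\mathcal{U}$. Hence every monomial appearing in $g$ lies in $\closedfaAN$, which proves (i) and reduces (ii) to the monomial case.

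For the forward direction of (ii), suppose $\beta \in P(\fa)$. By Definition \ref{Newton} there exist $t_i \geq 0$ with $\sum t_i = 1$ and a multi-index $\mathbf{y}$ with non-negative entries such that $\beta = \sum_i t_i\alpha^i + \mathbf{y}$; hence $|z^\beta| = |z^\mathbf{y}|\prod_i |z^{\alpha^i}|^{t_i}$, and \eqref{AGinequality} gives $|z^\beta| \leq |z^\mathbf{y}|\sum_i|z^{\alpha^i}|$. Since $|z^\mathbf{y}|$ is continuous and hence bounded on any compact neighborhood, this is precisely \eqref{closure_ineq}. For the converse, if $\beta \notin P(\fa)$, use that $P(\fa)$ is closed and convex (it is the Minkowski sum of the compact convex hull of the exponents with $\mathbb{R}_{\geq 0}^n$) together with the fact that $P(\fa) + \mathbb{R}_{\geq 0}^n \subseteq P(\fa)$ to produce a supporting hyperplane of $P(\fa)$ whose inward normal has non-negative components; after a permutation of coordinates and rescaling so that the first component is $1$, one obtains $\mathbf{b} = (1, b_2, \dots, b_n)$ with $b_j \geq 0$ and $\langle\beta,\mathbf{b}\rangle < m := \min_i\langle\alpha^i,\mathbf{b}\rangle$. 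Test \eqref{closure_ineq} along the real curve $z_1 = r$, $z_j = r^{b_j}$ for $j \geq 2$, with $r \in (0,1)$: then $|z^\beta| = r^{\langle\beta,\mathbf{b}\rangle}$ and $\sum_i|z^{\alpha^i}| = \sum_i r^{\langle\alpha^i,\mathbf{b}\rangle} \leq k\,r^m$, so
\begin{equation*}
\frac{|z^\beta|}{\sum_i|z^{\alpha^i}|} \geq \frac{1}{k}\,r^{\langle\beta,\mathbf{b}\rangle - m} \longrightarrow \infty \quad \text{as } r \to 0^+,
\end{equation*}
contradicting \eqref{closure_ineq} in any neighborhood of the origin.

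The step I expect to require the most care is the construction of the torus-invariant tube $\mathcal{U}$ in the proof of (i): when $q \neq 0$, the neighborhood $\mathcal{V}$ from Definition \ref{closure_an} need not itself be torus-invariant, so one must combine compactness of $T^n\cdot q$ with openness of the finite subcover to produce a product of annuli on which the defining inequality holds with a single constant. Once this compactness point is handled, the rest of the proof is a direct transcription of the AM--GM and supporting-hyperplane arguments already used in Howald's theorem.
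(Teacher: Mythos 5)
Your proposal is correct. Part (ii) is essentially the paper's argument: AM--GM for the implication $\beta\in P(\fa)\implies z^\beta\in\closedfaAN$, and a separating hyperplane with inward-pointing normal $\mathbf{b}=(1,b_2,\dots,b_n)$, tested along the curve $z=(r,r^{b_2},\dots,r^{b_n})$, for the converse. Your forward direction is in fact slightly tidier than the paper's: by writing $\beta=\sum t_i\alpha^i+\mathbf{y}$ with $\mathbf{y}$ non-negative and keeping the factor $|z^{\mathbf{y}}|$ (which is merely bounded on compacta) instead of dropping it near the origin, you get the estimate at every $q$ in one stroke and avoid the paper's case split between the origin and points with some nonzero coordinates.

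Where you take a genuinely different route is (i). You prove it directly by torus averaging: compactness of the orbit $T^n\cdot q$ lets you extract a $T^n$-invariant tube $\mathcal{U}$ on which the defining inequality holds with one constant, and then the Fourier-coefficient identity isolates each $g_\gamma z^\gamma$ from $g$, using that $\sum_i|z^{\alpha^i}|$ is $T^n$-invariant. This proves (i) outright, \emph{before} and independently of (ii). The paper instead proves (ii) first and derives (i) by contradiction: if some exponent $\gamma_0$ of $g$ lies outside $P(\fa)$, the separating vector $\mathbf{b}$ gives a curve $(\tilde w_1 r,\dots,\tilde w_n r^{b_n})$ with $\tilde w$ chosen generically on the unit torus so the lowest-order terms of $g$ do not cancel, forcing $|g|\gtrsim r^{t_0}$ while $\sum_i|z^{\alpha^i}|\sim r^{m_0}$ with $t_0<m_0$. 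Both are sound. Your approach buys conceptual clarity --- monomiality of $\closedfaAN$ falls out of the torus symmetry of $\sum_i|z^{\alpha^i}|$ with no worry about cancellation among the terms of $g$, and it decouples (i) from (ii) --- at the cost of introducing a Fourier-analytic ingredient; the paper's approach stays entirely within the Newton-polyhedron-plus-curve circle of ideas used throughout the section. The step you flag as delicate, extracting the invariant tube, is indeed the one place that needs care, but the standard compactness argument does produce a product of annuli (or disks in the coordinates where $q_j=0$) inside any open cover of $T^n\cdot q$, so it goes through.
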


\begin{proof} As in the proof of Theorem \ref{main}, the equivalence in (ii) will be shown first. Let $\beta$ be a multi-index such that $\beta \in P(\fa)$; then there exist $t_i\geq0$ satisfying $t_1+\dots +t_k=1$ such that $t_1\alpha^1+\dots +t_k\alpha^k\preceq \beta$. In a neighborhood of the origin, the inequality
\begin{align*}
|z^{\beta}|&=|z_1|^{\beta_1}\dots |z_n|^{\beta_n}\\&\lesssim |z_1|^{t_1\alpha^1_1+\dots +t_k\alpha^k_1}\dots |z_n|^{t_1\alpha^1_n+\dots +t_k\alpha^k_n} = A
\end{align*}
trivially holds. However rearranging these terms and applying \eqref{AGinequality} gives
\begin{align*}
A&=|z_1|^{t_1\alpha^1_1}|z_2|^{t_1\alpha^1_2}\dots |z_n|^{t_1\alpha^1_n}\dots |z_1|^{t_k\alpha^k_1}|z_2|^{t_k\alpha^k_2}\dots |z_n|^{t_k\alpha^k_n}\\&=\left|z^{\alpha^1}\right|^{t_1}\dots\left|z^{\alpha^k}\right|^{t_k}
\lesssim \left|z^{\alpha^1}\right|+\dots+\left|z^{\alpha^k}\right|.
\end{align*}
This shows that $f(z)=z^{\beta}$ satisfies \eqref{closure_ineq} for $z$ near the origin. Near other points, $q$, inequality \eqref{closure_ineq} for $f(z)=z^{\beta}$ either holds trivially, or holds by the above argument, after eliminating the non-zero coordinates of $q$ as done in the proof of Theorem \ref{main}. Consequently, $\beta\in P(\fa)\implies z^{\beta}\in \closedfaAN$.

For the converse, take $\beta \not \in P(\fa)$ and observe, as in the proof of Theorem \ref{main}, that there exists a vector $\mathbf{b}=(1,b_2,\dots ,b_n)$ such that
\begin{equation}\label{necessary2}
\left<\beta,\mathbf{b}\right> \hskip .3cm < \min_{1\leq i\leq k}\left<\alpha^i,\mathbf{b}\right>:=m.
\end{equation}
[Note the strict inequality in \eqref{necessary2}, in contrast to \eqref{necessary}.]
Consider the rectangle $[0,1]\times \left[\frac{r_1^{b_2}}{2},r_1^{b_2}\right]\times\dots \times\left[\frac{r_1^{b_n}}{2},r_1^{b_n}\right]\subset \mathbb{R}^n_+$. For $z$ in this rectangle, we have
$$
\left|z^{\beta}\right|\sim r_1^{\left<\beta,\mathbf{b}\right>}\quad\text{ while }\quad
\sum_{j=1}^k\left|z^{\alpha^j}\right|\sim r_1^m.
$$
However, taking $r_1\to 0$ and noting \eqref{necessary2}, it follows that inequality \eqref{closure_ineq} fails for $f(z)=z^{\beta}$ near the origin.
Thus, $\beta \not \in P(\fa)\implies z^{\beta}\not \in \closedfaAN$, which completes the proof of (ii).

It remains to show that (i) holds. Let $g(z)=\sum_{\gamma}g_{\gamma}z^{\gamma}$ be an arbitrary polynomial in $\closedfaAN$. We claim that each exponent $\gamma$
appearing in this expression must reside in $P(\fa)$; this implies (by the already established (ii)) that each monomial $z^{\gamma}$ in $g(z)$ belongs to $\closedfaAN$, which implies that $\closedfaAN$ is a monomial ideal.

Suppose there exists an exponent $\gamma_0\not \in P(\fa)$. By \eqref{necessary2}, there exists $\mathbf{b}=(1,b_2,\dots ,b_n)$, $b_i\geq0$ such that
\begin{equation}\label{t_0m_0}
t_0=:\left<\gamma_0,\mathbf{b}\right> < \min_{1\leq i \leq k}\left<\alpha^k,\mathbf{b}\right>:=m_0.
\end{equation}
The basic idea is to approach the origin along the curve $\left(r,r^{b_2},\dots ,r^{b_n}\right)$ and show that inequality \eqref{closure_ineq} fails. This idea is slightly complicated by the fact that
$$g\left(r,r^{b_2},\dots r^{b_n}\right)=\sum_{\gamma}g_{\gamma}\left(r,r^{b_2},\dots r^{b_n}\right)^{\gamma}=\sum_{\gamma}g_{\gamma}r^{<\gamma,\mathbf{b}>}$$
might not contain the term $r^{t_0}$, due to cancellation.

This difficulty is easily overcome. Since none of the coefficients $g_{\gamma}$ are zero,
$$w\to \mathcal{P}(w)=\sum^{\ast}_{\gamma}g_{\gamma}\left(w_1r,w_2r^{b_2},\dots ,w_nr^{b_n}\right)^{\gamma}=
r^{t_0}\sum^{\ast}_{\gamma}g_{\gamma}w^{\gamma}$$
is a non-trivial polynomial of $w$, for any fixed $r$, where $\sum^{\ast}$ denotes the summation over the multi-indices $\gamma$ such that $\left<\gamma,\mathbf{b}\right>=t_0$. This  polynomial $\mathcal{P}\in \mathbb{C}\left[w_1,\dots,w_n\right]$ could vanish at $w=(1,\dots,1)$, but at a generic point in $\mathbb{C}^n$ it is non-vanishing. Simply take any such point $\tilde w=\left(\tilde w_1,\dots , \tilde w_n\right)$ with $\left|\tilde w_j\right|=1$ for $j=1,\dots , n$ and approach
the origin along the curve $\left(\tilde w_1r,\tilde w_2r^{b_2},\dots,\tilde w_nr^{b_n}\right)$.

As there are only finitely many exponents $\gamma$ in the polynomial $g(z)$, it is easy to compute $\min_{\gamma}\left<\gamma,\mathbf{b}\right>$ and see that
$\min_{\gamma}\left<\gamma,\mathbf{b}\right>\leq t_0$ (with the sign choice mentioned above \eqref{necessary}. Thus, as $r\to 0$,
$$r^{t_0}\lesssim\left|g\left(w_1r,w_2r^{b_2},\dots,w_nr^{b_n}\right)\right|.$$
On the other hand, as $r\to 0$ on the curve $\left(w_1r,w_2r^{b_2},\dots,w_nr^{b_n}\right)$, 
 $$\left|z^{\alpha^1}\right|+\dots +\left|z^{\alpha^k}\right|=|w|^{\alpha^1}r^{<\alpha^1,\mathbf{b}>}+\dots +|w|^{\alpha^k}r^{<\alpha^k,\mathbf{b}>}\sim r^{m_0},$$
 since each $w_i\not = 0$.

Since \eqref{t_0m_0} holds, these two inequalities show \eqref{closure_ineq} fails for $g$. This contradicts the assumption that $g\in\closedfaAN$ and finishes the proof of (i).
\end{proof}

With Theorem \ref{description} in hand, a proof of Teissier's general result on $\closedfaAN=\closedfaAL$ can be given for the case that $\fa$ is a monomial ideal. The interesting implication is that inequality \eqref{closure_ineq} forces an algebraic relationship between $f$ and the generators of $\fa$, i.e., that (a) implies (b) in the following result.

\begin{theorem}\label{definition}{\rm (Teissier)}
Let $\fa=\left(z^{\alpha^1},\dots ,z^{\alpha^k}\right)$ be a monomial ideal in $\mathbb{C}[z_1,\dots ,z_n]$.
Given $f\in\mathbb{C}[z_1,\dots ,z_n]$, the following are equivalent:
\begin{enumerate}
\item[(a)] $f\in \closedfaAN$.
\item[(b)] $f\in \closedfaAL$.
\item[(c)] There exists a non-zero ideal $\fb\subset \mathbb{C}[z_1,\dots ,z_n]$ such that $f\cdot\fb\subset\fa\cdot\fb.$
\end{enumerate}
\end{theorem}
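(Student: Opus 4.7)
The plan is to prove the cycle (b) $\Rightarrow$ (a) $\Rightarrow$ (c) $\Rightarrow$ (b). The first and third implications are classical and valid for any polynomial ideal, while only the middle step uses that $\fa$ is monomial, and that is where Theorem \ref{description} enters.

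The implication (b) $\Rightarrow$ (a) is immediate from the integral equation: on any compact neighborhood $\mathcal{V}$, each $b_j \in \fa^j$ is a polynomial combination of $j$-fold products of the generators $p_i$, so $|b_j(z)| \lesssim \bigl(\sum_i |p_i(z)|\bigr)^j$ on $\mathcal{V}$; substituting into $f^N = -b_1 f^{N-1} - \dots - b_N$ and invoking the standard root-size bound yields $|f(z)| \lesssim \sum_i |p_i(z)|$ on $\mathcal{V}$. The implication (c) $\Rightarrow$ (b) is the classical determinant trick: writing $\fb = (b_1, \dots, b_m) \neq 0$ and expressing $f b_i = \sum_j c_{ij} b_j$ with $c_{ij} \in \fa$, multiplication of $(fI - C)\vec b = 0$ by the adjugate of $fI - C$ shows that $\det(fI - C)$ annihilates every $b_j$, and since $\mathbb{C}[z_1,\dots,z_n]$ is a domain and $\fb \neq 0$, one concludes $\det(fI - C) = 0$. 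This is a monic polynomial equation in $f$ whose coefficient of $f^{m-j}$ is (up to sign) a sum of $j\times j$ principal minors of $C$, hence an element of $\fa^j$.

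The main step is (a) $\Rightarrow$ (c). By Theorem \ref{description}(i), $\closedfaAN$ is a monomial ideal; writing $f = \sum_\gamma g_\gamma z^\gamma$, it suffices to produce, for each $\gamma$ appearing in $f$, an ideal $\fb_\gamma$ with $z^\gamma \fb_\gamma \subset \fa\,\fb_\gamma$, for then $\fb := \prod_\gamma \fb_\gamma$ satisfies $f \cdot \fb \subset \fa \cdot \fb$. By Theorem \ref{description}(ii), each such $\gamma$ lies in $P(\fa)$, so there exist $t_i \geq 0$ with $\sum t_i = 1$ and $\sum_i t_i \alpha^i \preceq \gamma$. Because $P(\fa)$ is a rational polyhedron (cut out by rational linear inequalities with integer data), such a witness $(t_i)$ may be chosen rational; clearing a common denominator $N$ gives non-negative integers $n_i := N t_i$ with $\sum n_i = N$ and $N\gamma \succeq \sum_i n_i \alpha^i$. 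Equivalently, $z^{N\gamma}$ is divisible by $(z^{\alpha^1})^{n_1}\cdots (z^{\alpha^k})^{n_k} \in \fa^N$, so $(z^\gamma)^N \in \fa^N$. The choice $\fb_\gamma := \fa^{N-1} + z^\gamma\fa^{N-2} + \dots + z^{(N-1)\gamma}$ then visibly satisfies $z^\gamma \fb_\gamma \subset \fa\,\fb_\gamma$: the only non-trivial containment is $z^{N\gamma} \in \fa^N$, which has just been verified.

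The expected obstacle is the rational-convex-combination step: upgrading the real inequality $\sum t_i \alpha^i \preceq \gamma$ to an integral witness $(z^\gamma)^N \in \fa^N$. This is handled painlessly since the linear-programming feasibility that puts $\gamma$ into $P(\fa)$ has rational data, so real feasibility gives rational feasibility. It is the one place where the monomial structure is exploited beyond Theorem \ref{description} itself.
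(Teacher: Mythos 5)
Your proposal is correct, and the implications $(c)\Rightarrow(b)$ (determinant trick) and $(b)\Rightarrow(a)$ (estimate each $b_i\in\fa^i$ and absorb $|f|$) match the paper's treatment. Where you diverge is $(a)\Rightarrow(c)$, and your route is genuinely different. The paper proves a ``power lemma'': for $A$ equal to the number of generators of $\fa$, one has $\closedfaA = \fa\cdot\closedfaAminus$; this is shown by pushing Theorem~\ref{description}(ii) through an averaging argument on the coefficients $D_i$ of a convex representation, and then $(c)$ follows by taking the single ideal $\fb=\closedfaAminus$ together with submultiplicativity of integral closure, $\closedfaAN\cdot\closedfaAminus\subset\closedfaA$. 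You instead reduce to individual monomials $z^\gamma$ of $f$ (via Theorem~\ref{description}(i) and~(ii)), upgrade the real witness $\sum t_i\alpha^i\preceq\gamma$ to a rational one by linear-programming feasibility over $\mathbb{Q}$, clear denominators to obtain $(z^\gamma)^N\in\fa^N$, and then hand-build $\fb_\gamma=\fa^{N-1}+z^\gamma\fa^{N-2}+\cdots+(z^{(N-1)\gamma})$ so that $z^\gamma\fb_\gamma\subset\fa\fb_\gamma$; finally $\fb=\prod_\gamma\fb_\gamma$ handles all of $f$. Your version is more elementary in that it avoids both the power lemma and the submultiplicativity of $\overline{(\cdot)}^{an}$, at the cost of producing a less canonical $\fb$ (a product of per-monomial ideals rather than one natural ideal) and of not yielding the power lemma, which the paper records as an independently useful consequence with the effective bound $A=k$. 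Both arguments exploit the monomial structure in exactly one place --- for you, the rational-polyhedron step; for the paper, the combinatorial choice of $s_i$'s in the power lemma --- and both rest on Theorem~\ref{description}.
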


\begin{proof} The implication $(c)\implies(b)$ follows from the \textit{determinant trick}, which is well-explained in \cite{Lazarsfeldbook} pages 217--218.

Next, consider the statement $(b)\implies (a)$. This implication holds in general, i.e., without assuming that $\fa$ is monomial, so we shall write $\fa =\left( p_1,\dots p_k\right)$ for notational convenience. Assume that relation \eqref{closure_equation} holds. Fix a point $q\in \mathbb{C}^n$ and a neighborhood $\mathcal{V}$ of $q$. Since the coefficients $b_i\in a^i$ appearing in  \eqref{closure_equation} have the form
$b_i(w)=\sum_{j_1,\dots ,j_i}c_{j_1,..,j_i}p_{j_1}(w)\dots p_{j_i}(w)$
for some $c_{j_1,\dots ,j_i}\in \mathbb{C}$, simple estimation shows that there exists $C>0$, depending on $q, \mathcal{V}$ and $b^i$, such that for all $w\in \mathcal{V}$
\begin{align}\label{estimate}
\left|b_i(w)\right|=\left|\sum_{j_1,\dots ,j_i}c_{j_1,..,j_i}p_{j_1}(w)\dots p_{j_i}(w)\right|\leq C\left(|p_1(w)|+\dots +|p_k(w)|\right)^i.
\end{align}
Now re-write \eqref{closure_equation} and use \eqref{estimate}:
\begin{align*}
\left|f^N(w)\right|&=\left|b_1(w)f^{N-1}(w)+\dots +b_{N-1}(w)f(w)+b_N(w)\right|\\
&\leq C\left\{ \left(|p_1(w)|+\dots +|p_k(w)|\right)\left|f^{N-1}(w)\right|+\dots \right. \\
&+\left.\left(|p_1(w)|+\dots +|p_k(w)|\right)^{N-1}|f(w)|+\left(|p_1(w)|+\dots +|p_k(w)|\right)^N\right\}\\
&\leq C\left( \sum_{j=1}^k|p_j(w)|+|f(w)|\right)^N-C|f(w)|^N.
\end{align*}
This yields the estimate
\begin{equation*}
(C+1)|f(w)|^N\leq C\left(\sum_{j=1}^k|p_j(w)|+|f(w)|\right)^N,
\end{equation*}
or, after taking roots,
\begin{equation*}
|f(w)|\leq \left(\frac{C}{C+1}\right)^{1/N}\left(\sum_{j=1}^k|p_j(w)|+|f(w)|\right).
\end{equation*}
Since the constant on the right-hand side of this inequality is $< 1$, the $|f(w)|$ term can be absorbed, yielding the estimate 
$$|f(w)|\leq C'\sum_{j=1}^k|p_j(w)|.$$
for all $w\in \mathcal{V}$. This is exactly \eqref{closure_ineq}, which finishes the implication $(b)\implies (a)$.

It remains to prove $(a)\implies(c).$ For this we use Theorem \ref{description} to establish the following Lemma.

\begin{lemma}\label{power_lemma}
There exists an integer $A>0$ such that
\begin{equation}\label{Apower}
\closedfaA=\fa\cdot\closedfaAminus
\end{equation}
\end{lemma}
\begin{proof}
The ideal $\faA$ is also a monomial ideal and is generated by monomials $(z^{\gamma^i})_{i=1,\dots ,M}$ where $\gamma^i$ is of the form $c^i_1\alpha^1+\dots +c^i_k\alpha^k$ for some non-negative integers $c^i_j$ such that $c^i_1+\dots +c^i_k=A$ for any $i=1,\dots ,M$.

Let $z^{\beta}\in\closedfaA$.
By Theorem \ref{description}, $\beta\in P\left(\faA\right)$, so there exist real numbers $t_1,\dots ,t_M\geq 0$ summing to 1 such that
$$t_1\gamma^1+\dots +t_M\gamma^M\preceq \beta.$$
An expansion of the left-hand side of this line yields
\begin{align*}
t_1\gamma^1+\dots +t_M\gamma^M&=t_1\left(c^1_1\alpha^1+\dots +c^1_k\alpha^k)+\dots +t_M(c^M_1\alpha^1+\dots +c^M_k\alpha^k\right)\\
&=D_1\alpha^1+\dots +D_k\alpha^k
\end{align*}
where $D_i=t_1c^1_i+\dots +t_Mc^M_i$. Note that
\begin{align*}
D_1+\dots +D_k&=t_1c^1_1+\dots +t_Mc^M_1+\\
&+\vdots~\vdots~\vdots\\
&+t_1c^1_k+\dots +t_Mc^M_k\\
&=t_1A+\dots +t_MA=A
\end{align*}
Consequently, if $A\geq k$ then one of $D_i$ must be greater than 1. Let's say $D_1\geq 1$. We now claim that
\begin{equation*}
z^{(D_1-1)\alpha^1+\dots +D_k\alpha^k}\in \closedfaAminus.
\end{equation*}

The ideal $\faAminus$ is a monomial ideal and is generated by monomials $(z^{\eta^i})_{i=1,\dots ,L}$ where $\eta^i$ is of the form $d^i_1\alpha^1+\dots +d^i_k\alpha^k$ for some non-negative integers $d^i_j$ such that $d^i_1+\dots +d^i_k=A-1$ for any $i=1,\dots ,L$. By Theorem \ref{description}, $z^{(D_1-1)\alpha^1+\dots +D_k\alpha^k}\in \closedfaAminus$ if and only if there exists real numbers $s_1,\dots ,s_L\geq0$ summing to 1 such that
\begin{equation}\label{test}
s_1\eta^1+\dots +s_L\eta^L\preceq(D_1-1)\alpha^1+\dots +D_k\alpha^k.
\end{equation}

To see \eqref{test}, re-express the $(\eta^i)$ as $\eta^1=(A-1)\alpha^1, \eta^2=(A-1)\alpha^2,\dots ,\eta^k=(A-1)\alpha^k$. The choice for the coefficients $s_i$'s then presents itself:
\begin{align*}
s_1&=\frac{D_1-1}{A-1}\\
s_2&=\frac{D_2}{A-1}\\
&\vdots\\
s_k&=\frac{D_k}{A-1}\\
s_j&=0\text{ for }j>k.
\end{align*}
This choice guarantees \eqref{test}, which implies $z^{(D_1-1)\alpha^1+\dots+D_k\alpha^k}\in \closedfaAminus$. Thus, $z^{\beta}\in \fa\cdot\closedfaAminus$ and one half of the lemma
\begin{equation*}
\closedfaA\subset\fa\cdot\closedfaAminus
\end{equation*}
is proved.

The opposite inclusion is easier and holds in a more general setting. For any two ideals, it is easy to check by using Definition \ref{closure_an} that
$$\fa\cdot\fb\subset\closedfaAN\cdot\closedfbAN\subset\overline{\fa\cdot\fb}^{an}.$$
This simple observation finishes the proof of the lemma.
\end{proof}
The proof of this lemma also gives an effective number $A$ in the equation \eqref{Apower}. The exponent $A$ can be taken equal to the number of generators of the monomial ideal $\fa$.\\

The proof of $(a)\implies(c)$ is now finished by taking $\fb=\closedfaAminus$. Lemma \ref{power_lemma} implies
\begin{equation*}
f\cdot\fb=f\cdot\closedfaAminus\subset\closedfa\cdot\closedfaAminus\subset\closedfaA=\fa\cdot\closedfaAminus=\fa\cdot\fb.
\end{equation*}
This is exactly $(c)$, and concludes the proof of Theorem \ref{definition}.\\
\end{proof}

\bibliographystyle{plain}
\bibliography{multiplier_ideals}

\begin{thebibliography}{10}

\bibitem{BlickleLazarsfeld}
Manuel Blickle and Robert Lazarsfeld.
\newblock An informal introduction to multiplier ideals.
\newblock In {\em Trends in commutative algebra}, volume~51 of {\em Math. Sci.
  Res. Inst. Publ.}, pages 87--114. Cambridge Univ. Press, Cambridge, 2004.

\bibitem{deFernex}
Tommaso de~Fernex.
\newblock Length, multiplicity, and multiplier ideals.
\newblock {\em Trans. Amer. Math. Soc.}, 358(8):3717--3731 (electronic), 2006.

\bibitem{demaillytrieste}
Jean-Pierre Demailly.
\newblock Multiplier ideal sheaves and analytic methods in algebraic geometry.
\newblock {\em ICTP Lect. Notes (Trieste, 2000)}, 6:1--148, 2001.

\bibitem{demaillypcmi}
Jean-Pierre Demailly.
\newblock Analytic approach to the minimal model program and the abundance
  conjectures.
\newblock {\em PCMI Lect. Notes}, 2008.

\bibitem{DEL}
Jean-Pierre Demailly, Lawrence Ein, and Robert Lazarsfeld.
\newblock A subadditivity property of multiplier ideals.
\newblock {\em Michigan Math. J.}, 48:137--156, 2000.
\newblock Dedicated to William Fulton on the occasion of his 60th birthday.

\bibitem{DemaillyKollar}
Jean-Pierre Demailly and J{\'a}nos Koll{\'a}r.
\newblock Semi-continuity of complex singularity exponents and
  {K}\"ahler-{E}instein metrics on {F}ano orbifolds.
\newblock {\em Ann. Sci. \'Ecole Norm. Sup. (4)}, 34(4):525--556, 2001.

\bibitem{eisenbudbook}
David Eisenbud.
\newblock {\em Commutative Algebra, with a view toward Algebraic Geometry}.
\newblock Number 150 in Graduate Texts in Mathematics. Springer-Verlag, New
  York, 1995.

\bibitem{hironaka}
Heisuke Hironaka.
\newblock Resolution of singularities of an algebraic variety over a field of
  characteristic zero, i, ii.
\newblock {\em Ann. of Math.}, 79:109--203, 205--326, 1964.

\bibitem{Howald2001}
J.~A. Howald.
\newblock Multiplier ideals of monomial ideals.
\newblock {\em Trans. Amer. Math. Soc.}, 353(7):2665--2671 (electronic), 2001.

\bibitem{Lazarsfeldbook}
Robert Lazarsfeld.
\newblock {\em Positivity in algebraic geometry. {II}}, volume~49 of {\em
  Ergebnisse der Mathematik und ihrer Grenzgebiete. 3. Folge. A Series of
  Modern Surveys in Mathematics [Results in Mathematics and Related Areas. 3rd
  Series. A Series of Modern Surveys in Mathematics]}.
\newblock Springer-Verlag, Berlin, 2004.
\newblock Positivity for vector bundles, and multiplier ideals.

\bibitem{Teissier2008}
Monique Lejeune-Jalabert and Bernard Teissier.
\newblock Cl\^oture int\'egrale des id\'eaux et \'equisingularit\'e.
\newblock {\em Ann. Fac. Sci. Toulouse Math. (6)}, 17(4):781--859, 2008.
\newblock With an appendix by Jean-Jacques Risler.

\bibitem{MustataSum}
Mircea Musta{\c{t}}{\v{a}}.
\newblock The multiplier ideals of a sum of ideals.
\newblock {\em Trans. Amer. Math. Soc.}, 354(1):205--217 (electronic), 2002.

\bibitem{MustataGraded}
Mircea Musta{\c{t}}{\v{a}}.
\newblock On multiplicities of graded sequences of ideals.
\newblock {\em J. Algebra}, 256(1):229--249, 2002.

\bibitem{Nadel90}
Alan Nadel.
\newblock Multiplier ideal sheaves and {K}\" ahler-{E}instein metrics of
  positive scalar curvature.
\newblock {\em Ann. of Math.}, 132:549--596, 1990.

\bibitem{Siu93}
Yum-Tong Siu.
\newblock An effective {M}atsusaka big theorem.
\newblock {\em Ann. Instit. Fourier (Grenoble)}, 43(5):1387--1405, 1993.

\bibitem{Siu96}
Yum-Tong Siu.
\newblock Effective very ampleness.
\newblock {\em Invent. Math}, 124(1-3):563--571, 1996.

\bibitem{Siu98}
Yum-Tong Siu.
\newblock Invariance of plurigenera.
\newblock {\em Invent. Math}, 134(3):661--673, 1998.

\bibitem{Siu00}
Yum-Tong Siu.
\newblock Extension of twisted pluricanonical sections with plurisubharmonic
  weight and invariance of semipositively twisted plurigenera for manifolds not
  necessarily of general type.
\newblock {\em Complex Geometry (G\" ottingen, 2000), Springer, Berlin}, pages
  223--277, 2002.

\bibitem{teissier82}
Bernard Teissier.
\newblock Vari\' eti\' es polaries. ii. multipliciti\' es polaries, sections
  planes, et conditions de whitney.
\newblock {\em Algebraic Geometry (La R\' abida 1981), Lect. Notes. Math.},
  961:314--491, 1983.

\end{thebibliography}
\end{document}